\newtheorem{prop}{Proposition}[section]
\newtheorem{df}[prop]{Definition}
\newtheorem{thm}[prop]{Theorem}
\newtheorem{rema}[prop] {Remark}
\author{Enrico Boasso}
\keywords{Drazin spectrum, Banach algebra}
\subjclass[2000]{Primary 47A10, Secondary 47A05}
\begin{document}

\title[Drazin spectrum]{The Drazin Spectrum in  Banach Algebras}

\begin{abstract}Several basic properties of the Drazin spectrum in Banach algebras will be studied.  As an application, some results on meromorphic Banach space operators will be obtained.
\end{abstract}
\maketitle

\section{Introduction} \label{sect1}
\indent The Drazin spectrum of Banach space operators is closely related to other several spectra
such as the ascent, the descent and  the left and the right Drazin spectra, see for example \cite{K, B}. 
In \cite{B} the relationships
among all the aforementioned  spectra and the multiplication operators were studied both for Banach space bounded and linear maps
and for Banach algebra elements; what is more, a characterization of the spectrum in terms of the Drazin spectrum 
and the poles of the resolvent was also presented. On the other hand, in two recent articles the descent and the left Drazin spectra
of Banach space operators
were considered, see \cite{BKMO, BBO}. The main objective of this work is to study several basic properties of the Drazin spectrum
in Banach algebras similar to the ones presented in \cite{BKMO, BBO}.\par

\indent  In fact, in section 2 conditions equivalent to the emptyness of the Drazin spectrum will be given. 
In addition, it will be characterized when the Drazin spectrum is countable.
Moreover, if $a$ and $b$ belong to a 
Banach algebra $A$, then the Drazin spectra of $ab$ and $ba$ will be proved to coincide. On the other hand, in \cite[page 265]{BKMO}  was
pointed out that there is no relation between the descent spectra of $R_T$ and $T$, where $T\in L(H)$ is an operator
defined on the Hilbert space $H$, and $R_T\in L(L(H))$ is the right multiplication operator. In the next section
a description of the ascent and the descent spectra of the multiplication operators in Hilbert space will be given. To this end,
the relationships among the involution of a $C^*$-algebra and the ascent, the descent, and the Drazin spectra of the left and the right multipliation operators will
be characterized, see also \cite{B} for more results on the aforementioned spectra of the multiplication operators. 
Finally, as an application, in section 3 the Drazin spectrum will be used to prove several results on mermorphic Banach space operators.\par 
  
\indent The results of this work are related to the ones
presented at the 23th International Conference on Operator Theory held in
Timisoara, Romania, from June 29 to July 4, 2010. It is a
pleasure for the author to acknowledge the stimulating atmosphere
and the warm hospitality of the Conference. Moreover, the author
wish to express his fully indebtedness to the organizers of the
Conference, specially to Professor Dan Timotin, for the help to
attend the Conference.\par

\vskip.2truecm

\indent Before going on, several definitions and some notation will be recalled.\par

\indent From now on, $A$ will denote a unital Banach algebra and $e\in A$ will stand for the unit element of $A$.
If $a\in A$, then $L_a \colon A\to A$
and $R_a\colon A\to A$ will denote the maps defined by
left and right multiplication respectively, namely $L_a(x)=ax$ and $R_a(x)=xa$,   
where $x\in A$. \par

\indent Next follows the key notions of the present work.
 Given a Banach algebra $A$, an element $a\in A$ is said to be
\it Drazin invertible\rm, if there exist a necessarily unique $b\in A$ and some $m\in \mathbb N$ such that
$$
a^mba=a^m, \hskip.5truecm  bab=b,\hskip.5truecm ab=ba.
$$
\indent If the Drazin inverse of $a$ exists, then it will be denoted by $a^D$, 
see for example \cite{Dr, BS, B}. \par

\indent On the other hand, the notion of \it regularity \rm was introduced in \cite{KM}. 
 Recall that,
given a unital Banach algebra $A$ and a regularity $\mathcal R\subseteq A$,
the \it spectrum derived from the 
regularity \rm $ \mathcal R$  is defined by $\sigma_{\mathcal R} (a)=\{ \lambda\in \mathbb C\colon a-\lambda\notin  \mathcal R\}$,
where $a\in A$ and $ a-\lambda$ stands for  $a-\lambda e$ (see \cite{KM}).  Next consider the set
$\mathcal{DR} (A)$ = $\{ a\in A\colon  \hbox{  a is Drazin invertible}\}$. According to
\cite[Theorem 2.3]{BS}, $\mathcal{DR} (A)$ is a regularity. This fact led to the following definition
  \cite{BS}.\par
\begin{df} Let $A$ be a unital Banach algebra. The Drazin spectrum
of an element $a\in A$ is the set
$$
\sigma_{ \mathcal{ DR} }(a) = \{\lambda\in \mathbb C \colon a-\lambda\notin  \mathcal{ DR}(A)\}.
$$
\end{df}

\indent Note that $\sigma_{ \mathcal{ DR} }(a)\subseteq \sigma(a)$, the spectrum of $a$, and according to \cite[Theorem 1.4]{KM}, the Drazin spectrum
of a Banach algebra element satisfies the spectral mapping theorem for analytic functions
defined on a neighbourhood of the ususal spectrum which are non-constant on each component of its 
domain of definitioin. In addition, according to \cite[Proporsition 2.5]{BS},
$\sigma_{ \mathcal{ DR} }(a)$ is a closed subset of $\mathbb C$. \par

\indent Recall that according to \cite[Theorem 12]{B},
$$
\sigma(a)=\sigma_{\mathcal{DR}}(a)\cup \Pi(a),\hskip.3truecm 
\sigma_{\mathcal{DR}}(a)\cap \Pi(a)=\emptyset,\hskip.3truecm
\sigma_{\mathcal{DR}}(a) =\hbox{\rm acc }\sigma (a)\cup \mathcal{IES}(a)
$$
where $\Pi(a)$ is the set of poles of the resolvent of $a$ (see \cite[Remark 10]{B}), $\mathcal{IES}(a)
=$ iso $\sigma(a)\setminus\Pi (a)$, and if $K\subseteq \mathbb C$,
then iso $K$ denote the set of isolated points of $K$ and acc $K=K\setminus$ iso $K$.\par

\indent When $X$ is a Banach space,  $A=L(X)$ will denote the
Banach algebra of all operators defined on and with values in $X$. 
In addition, if $T\in L(X)$,  then $N(T)$ and $R(T)$ will stand for the null space and the
range of $T$ respectively. Recall that the \it descent \rm and the \it ascent \rm of $T\in L(X)$ are
$d(T) =\hbox{ inf}\{ n\ge 0\colon  R(T^n)=R(T^{n+1})\}$ and
$ a(T)=\hbox{ inf}\{ n\ge 0\colon  N(T^n)=N(T^{n+1})\}$
respectively, where if some of the above sets is empty, its infimum is then defined as $\infty$, see  
for example \cite{K, MM, BKMO, BBO,B}.\par 

\indent Given a Banach space $X$, the left and the right Drazin spectra
of an oper-
ator have been introduced. The mentioned spectra are the ones derived from the regularities
$\mathcal{ LD}(X)=\{ T\in L(X)\colon a(T) \hbox{ is finite and  }
R(T^{ a(T) +1 })\hbox{ is closed}\}$ and
$\mathcal{ RD}(X) = \{ T\in L(X)\colon d(T) \hbox{  is finite and  }$
$R(T^{ d(T) })\hbox{ is closed}\}$ respectively,
and they will be denoted by
$\sigma_{  \mathcal{ LD} }(T)$ and $\sigma_{  \mathcal{ RD} }(T)$ respectively. In addition, concerning the basic 
properties of the left and the right Drazin spectra, see \cite{MM, BBO}. \par

\indent Furthermore, if $X$ is a Banach space, then according again to \cite{MM}, the sets
$\mathcal R^a_4(X) = \{ T\in L(X)\colon d(T)\hbox{  is finite} \}$ and
 $\mathcal R^a_9(X) = \{ T\in L(X)\colon a(T) \hbox{  is finite}\}$ are regularities with well-defined  spectra derived from them, namely the \it descent spectrum \rm and the \it ascent spectrum \rm 
respectively. The descent spectrum was also studied in \cite{BKMO} and it will be denoted by
 $\sigma_{dsc}(T)$, $T\in L(X)$. On the other hand, $\sigma_{asc}(T)$ will stand for the ascent spectrum.
\par

\section{Properties of the Drazin spectrum} \label{sect2}

\indent Given a unital Banach algebra $A$ and $a\in A$, 
according to \cite[Proposition 1.5]{L}, necessary and sufficient
for the Drazin spectrum of $a$ to be empty is the fact that
the usual spectrum of $a$ consists in a finite number of poles of the resolvent of $a$. 
In the following theorem several other equivalent conditions 
of this property will be presented, compare with \cite[Theorem 1.5]{BKMO} and \cite[Theorem 2.7]{BBO}. 
To this end, recall first that an
element $a\in A$ is said to be \it algebraic\rm, if there exists a polynomial
$P$ with coeficients in $\mathbb C$ such that $P(a)=0$. In addition,
$\partial \sigma (a)$ will stand for the topological boundary of
$\sigma (a)$ and $\rho_{\mathcal{DR}}(a)$ for the \it Drazin resolvent set of $a$\rm,
i.e., $\rho_{\mathcal{DR}}(a)=\mathbb C\setminus \sigma_{\mathcal{DR}}(a)$.\par 

\begin{thm}Let $A$ be a unital Banach algebra and consider $a\in A$. 
Then, the following statements are equivalent.
$$
(i)\hskip.2truecm \sigma_{\mathcal{DR}}(a)=\emptyset,  
\hskip1cm (ii) \hskip.2cm \partial\sigma(a)\subseteq \rho_{\mathcal{DR}}(a),
\hskip1cm (iii)\hskip.2cm a \hbox{ is algebraic}.
$$
\end{thm}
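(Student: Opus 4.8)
The plan is to pivot all three statements through $(i)$, establishing $(i)\Leftrightarrow(ii)$ and $(i)\Leftrightarrow(iii)$ separately. The implication $(i)\Rightarrow(ii)$ is immediate: if $\sigma_{\mathcal{DR}}(a)=\emptyset$ then $\rho_{\mathcal{DR}}(a)=\mathbb{C}$, which contains $\partial\sigma(a)$ trivially. For the converse $(ii)\Rightarrow(i)$ I would exploit the decomposition $\sigma_{\mathcal{DR}}(a)=\mathrm{acc}\,\sigma(a)\cup\mathcal{IES}(a)$ recorded above. Assuming $(ii)$, that is $\partial\sigma(a)\cap\sigma_{\mathcal{DR}}(a)=\emptyset$, the inclusion $\mathrm{acc}\,\sigma(a)\subseteq\sigma_{\mathcal{DR}}(a)$ forces $\partial\sigma(a)\cap\mathrm{acc}\,\sigma(a)=\emptyset$, so every point of $\partial\sigma(a)$ is isolated in $\sigma(a)$, and hence isolated in the closed set $\partial\sigma(a)$.

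The decisive step is then topological. The boundary $\partial\sigma(a)$ is compact and discrete, hence finite; consequently $\mathbb{C}\setminus\partial\sigma(a)$ is connected, and as it is the disjoint union of the open sets $\mathrm{int}\,\sigma(a)$ and $\mathbb{C}\setminus\sigma(a)$ --- the second of which is nonempty because $\sigma(a)$ is bounded --- the interior $\mathrm{int}\,\sigma(a)$ must be empty. Therefore $\sigma(a)=\partial\sigma(a)$ is a finite set, so $\mathrm{acc}\,\sigma(a)=\emptyset$ and $\sigma(a)=\mathrm{iso}\,\sigma(a)$; combining this with the remaining part of $(ii)$, namely $\sigma(a)\cap\mathcal{IES}(a)=\emptyset$, yields $\mathcal{IES}(a)=\emptyset$, and the decomposition gives $\sigma_{\mathcal{DR}}(a)=\emptyset$. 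I expect this passage from discreteness of $\partial\sigma(a)$ to finiteness of $\sigma(a)$ to be the main obstacle of the argument.

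To handle $(i)\Leftrightarrow(iii)$ I would first use \cite[Proposition 1.5]{L} to recast $(i)$ as the condition that $\sigma(a)$ be a finite set of poles of the resolvent; it then remains to prove that this is equivalent to $a$ being algebraic. If $a$ is algebraic, choose a nonzero $P$ with $P(a)=0$; the spectral mapping theorem gives $P(\sigma(a))=\sigma(P(a))=\{0\}$, so $\sigma(a)$ lies in the finite zero set of $P$, and writing the minimal polynomial as $\prod_i(z-\lambda_i)^{n_i}$ each $\lambda_i$ is a pole of order $n_i$. Conversely, suppose $\sigma(a)=\{\lambda_1,\dots,\lambda_k\}$ with $\lambda_i$ a pole of order $n_i$, and let $p_1,\dots,p_k$ be the associated Riesz idempotents, which commute with $a$, are mutually orthogonal, sum to $e$, and satisfy $(a-\lambda_i)^{n_i}p_i=0$. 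Expanding $\prod_i(a-\lambda_i)^{n_i}=\bigl(\prod_i(a-\lambda_i)^{n_i}\bigr)\sum_j p_j$ and using these properties makes every summand vanish, so $\prod_i(a-\lambda_i)^{n_i}=0$ and $a$ is algebraic, which closes the chain of equivalences.
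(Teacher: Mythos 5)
Your proposal is correct, and its two halves differ from the paper's proof to very different degrees. For $(i)\Leftrightarrow(ii)$ you are essentially on the paper's path: both arguments rest on the decomposition $\sigma_{\mathcal{DR}}(a)=\mathrm{acc}\,\sigma(a)\cup\mathcal{IES}(a)$ from \cite[Theorem 12]{B} plus connectedness of $\mathbb{C}$; the paper observes that under $(ii)$ one gets $\mathrm{acc}\,\sigma(a)=\mathrm{int}\,\sigma(a)$, a bounded clopen set, hence empty, whereas you first show $\partial\sigma(a)$ is compact and discrete, hence finite, and then kill $\mathrm{int}\,\sigma(a)$ using connectedness of $\mathbb{C}\setminus\partial\sigma(a)$ --- the same idea, one step longer. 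The genuine divergence is in $(i)\Leftrightarrow(iii)$. The paper proves this equivalence first for $A=L(X)$, splitting $X=\oplus_i M_i$ into closed invariant subspaces via Taylor's theorems (5.7-A, 5.7-B, 5.8-A, 5.4-C of \cite{T}) so that each $T_i-\lambda_i$ is nilpotent, and then transfers to a general algebra through $L_a$, using \cite[Theorem 4(iv)]{B} together with the fact that $a$ is algebraic if and only if $L_a$ is. You instead stay inside the Banach algebra: you invoke \cite[Proposition 1.5]{L} (which the paper itself recalls immediately before the theorem) to identify $(i)$ with ``$\sigma(a)$ is a finite set of poles'', and then run both directions through the Riesz idempotents $p_i$, using $e=\sum_j p_j$, mutual orthogonality, commutation with $a$, and $(a-\lambda_i)^{n_i}p_i=0$. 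This is a genuinely different and arguably cleaner route: it avoids both the operator-theoretic machinery and the $L_a$-transfer, replacing Taylor's subspace decomposition by the algebra-internal resolution of the identity; its cost is a heavier reliance on the holomorphic functional calculus in $A$, where the paper uses only classical operator facts plus its own Theorem 12 and Theorem 4.

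One assertion in your sketch does need a line of justification: in the direction (algebraic $\Rightarrow$ finite set of poles), the claim that each root $\lambda_i$ of the minimal polynomial $P(z)=\prod_j(z-\lambda_j)^{n_j}$ is a pole of order $n_i$ does not follow from $P(a)=0$ and the spectral mapping theorem alone. To see it, multiply $P(a)=0$ by $p_i$ and note that for $j\neq i$ the element $(a-\lambda_j)p_i$ is invertible in the corner algebra $p_iAp_i$ (its spectrum there is $\{\lambda_i-\lambda_j\}$); cancelling these factors gives $(a-\lambda_i)^{n_i}p_i=0$, i.e. the singular Laurent coefficients of the resolvent at $\lambda_i$ vanish from index $n_i+1$ on, so $\lambda_i$ is a pole of order at most $n_i$. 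This is exactly the same functional-calculus fact you already use (in the opposite direction) in the converse, so with that line inserted your argument is complete.
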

\begin{proof} Note that  $\sigma_{\mathcal{DR}}(a)=\emptyset$ if and only if $\rho_{\mathcal{DR}}(a)=\mathbb C$. Then,
it is clear that the first statement implies the second.\par
\indent Suppose that $\partial\sigma(a)\subseteq \rho_{\mathcal{DR}}(a)$. Then according to \cite[Theorem 12]{B},
$$
\Pi (a)\subseteq \hbox{\rm iso }\sigma(a)\subseteq \partial\sigma(a)\subseteq \rho_{\mathcal{DR}}(a)\cap \sigma (a)=\Pi(a).
$$
Consequently,  $\Pi (a)=\hbox{\rm iso }\sigma(a)= \partial \sigma(a)$. What is more, since 
$\hbox{\rm acc }\sigma (a)=\sigma (a)\setminus \hbox{\rm iso }\sigma (a)=\sigma (a)\setminus\partial \sigma (a)$,
$\hbox{\rm acc }\sigma (a)$ coincides with the interior set of $\sigma (a)$. Thus, since $\hbox{\rm acc }\sigma (a)$ is closed and
open, $\hbox{\rm acc }\sigma (a)=\emptyset$. In particular, according to \cite[Theorem 12(iv)]{B},   $\sigma_{\mathcal{DR}}(a)$
is empty.\par
\indent To prove the equivalence between statements (i) and (iii), in first place the case $A=L(X)$, $X$ a Banach space, will be considered.\par
\indent Let $T\in L(X)$ be an algebraic operator and consider $P\in \mathbb C[X]$ the minimal polynomial such that $P(T)=0$. Then,
from this condition and the spectral mapping theorem follows that $\sigma (T)=\{ \lambda\in \mathbb C\colon P(\lambda)=0\}$.
In particular, if $\sigma (T)=\{ \lambda_i\colon 1\le i\le n\}$, then according to \cite[Theorem 5.7-A]{T}
and \cite[The-
orem 5.7-B]{T}, there exist $(M_i)_{1\le i\le n }$
such that $M_i\subseteq X$ is an invariant closed subspace for $T$, $1\le i\le n$,  $X=\oplus_{1\le i\le n }M_i$, and
if $T_i=T\mid_{M_i }^{M_i }$, then $\sigma(T_i)=\{\lambda_i\}$. Now well, given a fixed $i$, $1\le i\le n$, since $\lambda_j\notin\sigma (T_i)$, $1\le j\le n$, 
$j\neq i$, and $P(X)=\Pi_{k=1}^n (X-\lambda_k)^{n_k}$ for some $n_k\in \mathbb N$, $1\le k\le n$, $(T_i-\lambda_i)^{n_i}=0$. Therefore, $T_i-\lambda$ is Drazin invertible in $M_i$ for all $\lambda\in\mathbb C$, $1\le i\le n$. Since $X=\oplus_{1\le i\le n }M_i$
and $T-\lambda=\oplus_{i=1}^n (T_i-\lambda)$, 
$T-\lambda$ is Drazin invertible for all $\lambda\in \mathbb C$.\par

\indent On the other hand, if $\sigma_{\mathcal{DR}}(T)=\emptyset$, then according to \cite[Theorem 12(i)]{B},
$\sigma (T)=\Pi (T)$, actually $\sigma (T)$  is a finite set of poles. If 
$\sigma (T)=\{\lambda_i\colon 1\le i\le n\}$, then as in the previous paragraph consider $(M_i)_{1\le i\le n }$
such that $M_i\subseteq X$ is an invariant closed subspace for $T$, $1\le i\le n$,  $X=\oplus_{1\le i\le n }M_i$, and
if $T_i=T\mid_{M_i }^{M_i }$, then $\sigma(T_i)=\{\lambda_i\}$. Moreover, according to \cite[Theorem 5.8-A]{T},
a straightforward calculation proves that $\lambda_i$ is a pole of $T_i$. Then, 
$M_i=N(T_i-\lambda_i)^{m_i }\oplus R(T-\lambda_i)^{m_i }$ for some $m_i\in \mathbb N$, what is more,
$T_i-\lambda_i$ restricted to $R(T-\lambda_i)^{m_i }$ is invertible, see \cite[Theorem 5.8-A]{T}.  Now well, given $i$ such that
$1\le i\le n$, since $N_i=N(T_i-\lambda_i)^{m_i }$
and $R_i=R(T-\lambda_i)^{m_i }$ are closed invariant subspaces for $T_i-\lambda_i$, 
if $T_i^1$ and $T_i^2$ are the restrictions of $T_i$ to $N_i$ and $R_i$ respectively,
then $\sigma (T_i)
=\sigma (T_i^1)\cup\sigma (T_i^2)$, see \cite[Theorem 5.4-C]{T}. However, since $\sigma(T_i)=\{\lambda_i\}$, $N_i\neq 0$ and  $\lambda_i\in \sigma (T_i^1)\setminus \sigma (T_i^2)$, 
$\sigma(T_i^2)=\emptyset$. Therefore $(T_i-\lambda_i)^{m_i}=0$. Define $P(X)=\Pi_{i=1}^n (X-\lambda_i)^{m_i}$.
Since $X=\oplus_{1\le i\le n }M_i$, it is not difficult to prove that $P(T)=0$.\par

\indent To conclude the proof, use \cite[Theorem 4(iv)]{B} and the fact that $a\in A$ is algebraic if and
only if $L_a\in L(A)$ is algebraic.
\end{proof}

\indent Next at most countable Drazin spectrum will be characterized, see also \cite[Corollary 1.8]{BKMO} and \cite[Corollary 2.10]{BBO}.\par

\begin{thm}Let $A$ be a unital Banach algebra and consider $a\in A$. Then, the following
statements are equivalent.
$$
(i)\hskip.2cm \sigma(a) \hbox{   is at most countable},\hskip1cm
(ii)\hskip.2cm \sigma_{\mathcal{DR}}(a) \hbox{   is at most countable}.
$$
\indent Furthermore, in this case 
\begin{align*}
\sigma_{\mathcal{DR}}(a) &=\sigma_{\mathcal{LD}}(L_a) =\sigma_{\mathcal{RD}}(L_a)=\sigma_{dsc}(L_a)&\\
&=\sigma_{\mathcal{LD}}(R_a) =\sigma_{\mathcal{RD}}(R_a)=\sigma_{dsc}(R_a).&\\
\end{align*}
\end{thm}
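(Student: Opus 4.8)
The plan is to settle the equivalence of (i) and (ii) first, and then, assuming both hold, to obtain the chain of equalities by transferring everything to the multiplication operators $L_a,R_a\in L(A)$ and exploiting the operator results of \cite{BKMO, BBO}.

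For the equivalence, the implication $(i)\Rightarrow(ii)$ is immediate since $\sigma_{\mathcal{DR}}(a)\subseteq\sigma(a)$. For $(ii)\Rightarrow(i)$ I would decompose $\sigma(a)=\hbox{\rm acc }\sigma(a)\cup\hbox{\rm iso }\sigma(a)$ and observe two things. First, $\hbox{\rm iso }\sigma(a)$ is always at most countable: it is a discrete subspace of the separable metric space $\mathbb C$, as each of its points has a basic open neighbourhood meeting $\sigma(a)$ only in that point, and a discrete subspace of a separable metric space is at most countable. Second, the third identity of \cite[Theorem 12]{B} gives $\hbox{\rm acc }\sigma(a)\subseteq\hbox{\rm acc }\sigma(a)\cup\mathcal{IES}(a)=\sigma_{\mathcal{DR}}(a)$, which is at most countable by hypothesis. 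Hence $\sigma(a)$ is a union of two at most countable sets, which proves (i).

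For the chain of equalities, the first step is to pass from $a$ to its multiplication operators: by \cite[Theorem 4]{B} one has $\sigma(L_a)=\sigma(R_a)=\sigma(a)$ and $\sigma_{\mathcal{DR}}(L_a)=\sigma_{\mathcal{DR}}(R_a)=\sigma_{\mathcal{DR}}(a)$, so it suffices to prove, for each $T\in\{L_a,R_a\}$, that $\sigma_{\mathcal{LD}}(T)=\sigma_{\mathcal{RD}}(T)=\sigma_{dsc}(T)=\sigma_{\mathcal{DR}}(T)$. The inclusions $\sigma_{dsc}(T)\subseteq\sigma_{\mathcal{RD}}(T)\subseteq\sigma_{\mathcal{DR}}(T)$ and $\sigma_{\mathcal{LD}}(T)\subseteq\sigma_{\mathcal{DR}}(T)$ hold for every operator directly from the definitions of the regularities involved: a Drazin invertible operator has finite ascent and descent with closed stabilised range, so $\mathcal{DR}(X)\subseteq\mathcal{LD}(X)\cap\mathcal{RD}(X)$, while $\mathcal{RD}(X)\subseteq\mathcal R^a_4(X)$. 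Thus the entire content is the reverse inclusions $\sigma_{\mathcal{DR}}(T)\subseteq\sigma_{dsc}(T)$ and $\sigma_{\mathcal{DR}}(T)\subseteq\sigma_{\mathcal{LD}}(T)$.

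These reverse inclusions are where the countability hypothesis enters, and they rest on the following boundary principle: if $\lambda\in\partial\sigma(T)$ and $T-\lambda$ has finite descent (respectively finite ascent), then $\lambda$ is a pole of the resolvent, hence $T-\lambda$ is Drazin invertible. Granting this, note that an at most countable compact subset of $\mathbb C$ has empty interior, so $\partial\sigma(T)=\sigma(T)$; since $\sigma(T)=\sigma(a)$ is at most countable by (i), every point of $\sigma_{\mathcal{DR}}(T)\subseteq\sigma(T)$ is a boundary point, and the boundary principle yields $\lambda\in\sigma_{\mathcal{DR}}(T)\Rightarrow d(T-\lambda)=\infty\Rightarrow\lambda\in\sigma_{dsc}(T)$, and symmetrically for the ascent, giving $\sigma_{\mathcal{DR}}(T)\subseteq\sigma_{\mathcal{LD}}(T)$. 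Combined with the general inclusions, all four spectra coincide for $T=L_a$ and for $T=R_a$, and the reduction of the previous paragraph then delivers the full seven-fold chain. The boundary principle itself is the operator input underlying \cite[Corollary 1.8]{BKMO} for the descent and \cite[Corollary 2.10]{BBO} for the ascent and the left Drazin spectrum, and may be applied to $L_a$ and $R_a$ directly. I expect this principle to be the main obstacle: showing that a boundary point of the spectrum with finite descent is forced to have finite ascent, and hence to be a pole, requires genuine local spectral analysis near $\lambda$ rather than the purely formal inclusions between the regularities, and it is precisely this step that cannot hold without the boundary (equivalently, the countability) hypothesis.
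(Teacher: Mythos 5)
Your overall architecture is sound, and it is in essence the paper's own proof with the key citations unpacked: the paper reduces to $L_a$ and $R_a$ via \cite[Theorem 4(iv)]{B}, invokes \cite[Corollary 1.8]{BKMO} and \cite[Corollary 2.10]{BBO} to write $\sigma(L_a)$ as the disjoint union of $\Pi(L_a)$ with each of $\sigma_{dsc}(L_a)$ and $\sigma_{\mathcal{LD}}(L_a)$, and then cancels $\Pi(L_a)$ against the disjoint union $\sigma(L_a)=\sigma_{\mathcal{DR}}(L_a)\cup\Pi(L_a)$ of \cite[Theorem 12]{B}, using the same general inclusions between the regularities that you state. Your proof of the equivalence of (i) and (ii) is correct and matches the paper's modulo replacing ``$\Pi(a)$ is countable'' by ``the set of isolated points of $\sigma(a)$ is countable''. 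The problem is the ascent half of your boundary principle.

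As stated --- ``if $\lambda\in\partial\sigma(T)$ and $T-\lambda$ has finite ascent, then $\lambda$ is a pole'' --- the principle is false, and the step ``symmetrically for the ascent'' does not go through: it would prove $\sigma_{\mathcal{DR}}(T)\subseteq\sigma_{asc}(T)$, i.e.\ it would insert $\sigma_{asc}$ into the chain of equalities, which is impossible. Indeed, let $Q\in L(l^2(\mathbb N))$ be the weighted shift $Q((x_n)_{n\ge 1})=(0,x_1,x_2/2,x_3/3,\ldots)$. Then $Q$ is compact, injective and quasinilpotent, so $\sigma(Q)=\{0\}=\partial\sigma(Q)$ is countable and $a(Q)=0$, yet $0$ is not a pole (if $0$ were a pole of order $p$, injectivity would give $l^2(\mathbb N)=N(Q^p)\oplus R(Q^p)=R(Q^p)$, making $Q$ surjective, hence invertible). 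Thus $\sigma_{asc}(Q)=\emptyset$ while $\sigma_{\mathcal{DR}}(Q)=\sigma_{\mathcal{LD}}(Q)=\{0\}$. This asymmetry is precisely why $\sigma_{asc}$ is absent from the statement of the theorem while $\sigma_{dsc}$ is present: finite descent alone at a boundary point of the spectrum forces a pole (Lay's theorem, the input behind \cite[Corollary 1.8]{BKMO}), but finite ascent alone does not. The repair is to carry the closed-range condition on the ascent side: if $\lambda\in\partial\sigma(T)$, $a(T-\lambda)<\infty$ and $R((T-\lambda)^{a(T-\lambda)+1})$ is closed, then $\lambda\in\Pi(T)$; this is the result of \cite{BBO} underlying \cite[Corollary 2.10]{BBO}, i.e.\ the hypothesis must be $\lambda\notin\sigma_{\mathcal{LD}}(T)$ rather than $\lambda\notin\sigma_{asc}(T)$. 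With this corrected version your contradiction argument yields $\sigma_{\mathcal{DR}}(T)\subseteq\sigma_{\mathcal{LD}}(T)$ as intended, and the rest of the proof stands.
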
 
\begin{proof} Clearly, the first statement implies the second.
On the other hand, since according to \cite[Theorem 12(i)]{B},
$\sigma(a)=\sigma_{\mathcal{DR}}(a)\cup \Pi (a)$, being $\Pi (a)$
a countable set, the second statement implies the first.\par
\indent Concerning the remaining identities, since $\sigma (a)=\sigma (L_a)$,
\cite[Chapter I, section 5, Proposition 4]{BD},
according to \cite[Corollary 1.8]{BKMO} and \cite[Corollary 2.10]{BBO},
$$
\sigma (L_a)=\sigma_{dsc}(L_a)\cup \Pi (L_a)=\sigma_{\mathcal{LD}}(L_a)\cup \Pi(L_a).
$$
 Moreover, according to \cite[Theorem 1.5]{BKMO} and \cite[Theorem 2.7]{BBO},
$\sigma_{dsc}(L_a)\cap \Pi (L_a)=\emptyset=\sigma_{\mathcal{LD}}(L_a)\cap \Pi(L_a)$.
Therefore, according  to  \cite[Theorem 12]{B} and to the fact that
$\sigma_{\mathcal{LD}}(L_a)\subseteq \sigma_{\mathcal{DR}}(L_a)$ and 
$\sigma_{dsc}(L_a)\subseteq \sigma_{\mathcal{RD}}(L_a)\subseteq \sigma_{\mathcal{DR}}(L_a)$ (\cite[Theorem 3]{B}),
$$
\sigma_{\mathcal{DR}}(L_a)=\sigma_{\mathcal{LD}}(L_a) =\sigma_{\mathcal{RD}}(L_a)=\sigma_{dsc}(L_a).
$$
Since, according to \cite[Theorem 4(iv)]{B}, $\sigma_{\mathcal{DR}}(a)=\sigma_{\mathcal{DR}}(L_a)$,
the first identity holds. To prove the identity involving $R_a$, interchange $L_a$ with $R_a$
and use the same argument.
\end{proof}

\indent Given a Banach algebra $A$ and $a$ and $b\in A$, the identity
$$
\sigma(ab)\setminus\{ 0\}=  \sigma(ba)\setminus\{ 0\},
$$
is well known (\cite[Chapter I, section 5, Proposition 3]{BD}).
However, in the case of the Drazin spectrum, both spectra coincide,
see also \cite[Theorem 2.11]{BBO}.\par
 
\begin{thm}\label{thm3} Let $A$ be a unital Banach algebra and consider $a$ and $b\in A$. Then,
$$
\sigma_{\mathcal{DR}}(ab)=\sigma_{\mathcal{DR}}(ba).  
$$
\end{thm}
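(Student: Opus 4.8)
The plan is to establish the equality of Drazin resolvent sets $\rho_{\mathcal{DR}}(ab) = \rho_{\mathcal{DR}}(ba)$; by the definition of the Drazin spectrum this amounts to showing that for every $\lambda \in \mathbb{C}$ the element $ab - \lambda$ is Drazin invertible if and only if $ba - \lambda$ is. I would split the argument according to whether $\lambda = 0$ or $\lambda \neq 0$, because the classical punctured identity $\sigma(ab)\setminus\{0\} = \sigma(ba)\setminus\{0\}$ and the resolvent formula behind it are available only away from the origin.

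For $\lambda = 0$ the claim is precisely that $ab$ is Drazin invertible if and only if $ba$ is, and this is the heart of the matter. Writing $x := (ab)^D$ for the Drazin inverse, with index $m$, I would propose the candidate $y := b x^2 a$ (Cline's formula) and verify the three defining identities $(ba)y = y(ba)$, $y(ba)y = y$, and $(ba)^{k}y(ba) = (ba)^k$ for a suitable $k$. These rest on the commutation $x(ab) = (ab)x$ together with its consequences $(ab)x^2 = x = x^2(ab)$ and $(ab)^{j+1}x = (ab)^j$ for $j \geq m$, and on the elementary intertwining relations $(ba)^k b = b(ab)^k$ and $(ba)^k = b(ab)^{k-1}a$; taking $k = m+1$ makes the nilpotency identity close up. Interchanging the roles of $ab$ and $ba$ yields the converse, so $0 \in \rho_{\mathcal{DR}}(ab)$ if and only if $0 \in \rho_{\mathcal{DR}}(ba)$.

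For $\lambda \neq 0$ I would instead invoke the spectral description $\sigma_{\mathcal{DR}}(a) = \mathrm{acc}\,\sigma(a) \cup \mathcal{IES}(a)$ from \cite[Theorem 12]{B}. Since $\sigma(ab)$ and $\sigma(ba)$ coincide off the origin, a point $\lambda \neq 0$ is an accumulation point, respectively an isolated point, of the one spectrum exactly when it is of the other, so it only remains to compare the poles. Here I would use the resolvent identity $(z - ba)^{-1} = z^{-1}\bigl(b(z-ab)^{-1}a + e\bigr)$, valid for $z \neq 0$ in the common resolvent set, which one checks directly from $(z-ba)b = b(z-ab)$. As $z^{-1}$ and $e$ are analytic at $\lambda \neq 0$, pushing the principal part of the Laurent expansion of $(z-ab)^{-1}$ at $\lambda$ through this identity shows that a pole of the resolvent of $ab$ at $\lambda$ is a pole of the resolvent of $ba$ of no larger order, and the symmetric identity gives the reverse. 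Hence $\Pi(ab)\setminus\{0\} = \Pi(ba)\setminus\{0\}$, so $\mathcal{IES}(ab)$ and $\mathcal{IES}(ba)$ agree off the origin and $\sigma_{\mathcal{DR}}(ab)\setminus\{0\} = \sigma_{\mathcal{DR}}(ba)\setminus\{0\}$.

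Combining the two cases gives the claimed equality. I expect the main obstacle to be the case $\lambda = 0$: producing the correct closed form $b x^2 a$ for the Drazin inverse of $ba$ and carrying the index bookkeeping through all three axioms (in particular pinning down the exponent $k$) is where the real work lies, whereas the case $\lambda \neq 0$ is essentially a transcription of the classical $ab$–$ba$ resolvent relation into a statement about poles.
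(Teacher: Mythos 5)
Your proof is correct, but it takes a genuinely different route from the paper's. The paper first settles the Banach space operator case: it quotes \cite[Theorem 3]{B} together with the Buoni--Faires results \cite[Theorem 1]{BF} on ascent and descent of products to get $\sigma_{\mathcal{DR}}(ST)\setminus\{0\}=\sigma_{\mathcal{DR}}(TS)\setminus\{0\}$, and then cites the equivalence of \cite[Proposition 2.1(ii)]{R} and \cite[Proposition 2.1(iii)]{R} to handle the point $0$; finally it transfers the operator result to a general Banach algebra element via the left regular representation, using $\sigma_{\mathcal{DR}}(a)=\sigma_{\mathcal{DR}}(L_a)$ from \cite[Theorem 4(iv)]{B}. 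You instead work entirely inside the algebra $A$, with no reduction to operators: at $\lambda=0$ you prove Cline's formula by hand (your verification that $y=bx^2a$ satisfies the three axioms, with the index $k=m+1$, is sound --- note $(ba)^{k+1}b=b(ab)^{k+1}$ and $x^2(ab)=x$ are exactly what make the computation close), and away from $0$ you combine the classical punctured spectral identity with the pole-transfer argument through the resolvent identity $(z-ba)^{-1}=z^{-1}\bigl(b(z-ab)^{-1}a+e\bigr)$ and the characterization $\sigma_{\mathcal{DR}}=\mathrm{acc}\,\sigma\cup\mathcal{IES}$ of \cite[Theorem 12]{B}. What the paper's route buys is brevity, since all the hard content is outsourced to the literature; what your route buys is self-containedness (only \cite[Theorem 12]{B} is quoted), an explicit formula for $(ba)^D$ in terms of $(ab)^D$, which is of independent interest, and as a by-product the identity $\Pi(ab)\setminus\{0\}=\Pi(ba)\setminus\{0\}$, which the paper only derives afterwards, in Remark 2.4, as a consequence of the theorem.
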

\begin{proof} First of all the Banach space operator case will be studied. \par

\indent Let $X$ be a Banach space and consider $S$ and $T\in L(X)$. Then, according to 
\cite[Theorem 3]{B} and \cite[Theorem 1]{BF},
$$ 
\sigma_{\mathcal{DR}}(ST)\setminus\{ 0\}=\sigma_{\mathcal{DR}}(TS)\setminus\{ 0\}.  
$$
\indent Therefore, in order to conclude that the Drazin spectra of $ST$ and $TS$ coincide,
it is enough to prove that $ST$ is Drazin invertible if and only if $TS$ is Drazin invertible.
However, this statement is a consequence of the equivalence between \cite[Proposition 2.1(ii)]{R}
and \cite[Proposition 2.1(iii)]{R}.\par
\indent Finally, the general case can be proved applying \cite[Theorem 4(iv)]{B}.
\end{proof} 

\begin{rema}\rm  Let $A$ be a unital Banach algebra.
Note that according to Theorem \ref{thm3}, \cite[Theorem 12(i)]{B} and the identity $\sigma(ab)\setminus\{ 0\}=  \sigma(ba)\setminus\{ 0\}$,
$$
\Pi(ab)\setminus\{ 0\}=\Pi(ba)\setminus\{ 0\},
$$
where $a$ and $b$ belong to $A$. Furthermore, this identity can not be improved.
Consider for example the Banach space $X=l^2 (\mathbb N)$ 
and the operator
$S$, $T\in L(X)$  defined by
$$
S((x_n)_{n\ge 1})=(0,x_1,x_2, \ldots, x_n,\ldots),\hskip.5truecm T((x_n)_{n\ge 1})=(x_2, x_3,\ldots, x_n,\ldots ),
$$
where $(x_n)_{n\ge 1}\in X$. Then, a straightforward calculation proves that 
$$
\Pi (ST)=\{0,1\},\hskip1truecm \Pi (TS)=\{1\}.
$$
\end{rema}

\indent Next the relationships among the Drazin spectra of the multiplication operators and the adjoint of a $C^*$-algebra
will be studied. First of all the operator case will be considered.\par 

\begin{rema} \rm Given $X$ a Banach space and $T\in L(X)$, note that $\sigma_{\mathcal{DR}}(T^*)=\sigma_{\mathcal{DR}}(T)$,
where $T^*\in L(X^*)$ is the adjoint of $T$ and $X^*$ is the dual space of $X$.
Consequently, according to \cite[Theorem 4(iv)]{B}, all the sets $\sigma_{\mathcal{DR}}(T)$, $\sigma_{\mathcal{DR}}(T^*)$,
$\sigma_{\mathcal{DR}}(L_T)$, $\sigma_{\mathcal{DR}}(L_{T^*})$, $\sigma_{\mathcal{DR}}(R_T)$, and $\sigma_{\mathcal{DR}}(R_{T^*})$
coincide.\par

\indent In addition, according to \cite[page 139]{MM}, $\sigma_{\mathcal{LD}}(T^*)=\sigma_{\mathcal{RD}}(T)$ and 
$\sigma_{\mathcal{RD}}(T^*)=\sigma_{\mathcal{LD}}(T)$.
However, when $X$ is a Hilbert space, according to \cite[page 139]{MM} and \cite[Theorem 9]{B}
\begin{align*} 
&(i)&&\sigma_{  \mathcal{RD}}(T)= \sigma_{ \mathcal{RD}}(L_T)=  \sigma_{\mathcal{LD}}(R_T)=\overline{\sigma_{  \mathcal{LD}}(T^*)}=
\overline{\sigma_{ \mathcal{LD}}(L_{T^*})}.&\\
&\hbox{ }&&\hskip1.36truecm=\overline{\sigma_{ \mathcal{RD}}(R_{T^*})}.&\\
&(ii)&&\sigma_{  \mathcal{LD}}(T)= \sigma_{ \mathcal{LD}}(L_T)=  \sigma_{\mathcal{RD}}(R_T)=\overline{\sigma_{  \mathcal{RD}}(T^*)}=
\overline{\sigma_{ \mathcal{RD}}(L_{T^*})}.&\\
&\hbox{ }&&\hskip1.36truecm=\overline{\sigma_{ \mathcal{LD}}(R_{T^*})},&\\
\end{align*} 
where if $\lambda\in \mathbb C$, then $\overline{\lambda}$ denotes the complex conjugate of $\lambda$, and
if $A\subseteq\mathbb C$, then $\overline{A}=\{\overline{\alpha}\colon \alpha\in A\}$.\par
\indent What is more, if $T=T^*$, then a straightforward calculation proves that the sets considered in (i) and (ii) are
contained in the real line and they all coincide. In the following theorem, similar identities will be proved for $C^*$-algebra
elemets. However, before going on three facts should be mentioned.\par
\indent First, given a Banach algebra $A$, recall that $a\in A$ is said to be \it regular\rm, if there exists $b\in A$ such that $a=aba$, see \cite{HM}. 
Second, given a $C^*$-algebra $A$ and $a\in A$, note that $ \sigma_{\mathcal{DR}}(a^*)=\overline{\sigma_{\mathcal{DR}}}(a)$. Third, 
some notation. Given a $C^*$-algebra $A$, if $B\subseteq A$, then $B^*=\{\beta^*\colon \beta\in B\}$.
\end{rema} 

\begin{thm}\label{thm10} Let $A$ be a $\Bbb C^*$-algebra. Then, the following statements hold.\par
\begin{align*}
 &(i)& &\sigma_{dsc}(L_{a^*})=\overline{\sigma_{dsc}(R_a)},&&\sigma_{dsc}(R_{a^*})=\overline{\sigma_{dsc}(L_a)}.&\\
&(ii)& &\sigma_{\mathcal{RD}}(L_{a^*})=\overline{\sigma_{\mathcal{RD}}(R_a)},&&\sigma_{\mathcal{RD}}(R_{a^*})=\overline{\sigma_{\mathcal{RD}}(L_a)}.&\\
&(iii)& &\sigma_{asc}(L_{a^*})=\overline{\sigma_{asc}(R_a)},&&\sigma_{asc}(R_{a^*})=\overline{\sigma_{asc}(L_a)}.&\\
\end{align*}
\begin{align*}
&(iv)& &\sigma_{\mathcal{LD}}(L_{a^*})=\overline{\sigma_{\mathcal{LD}}(R_a)},&&\sigma_{\mathcal{LD}}(R_{a^*})=\overline{\sigma_{\mathcal{LD}}(L_a)}.&\\
\end{align*}

\indent Furthermore, when $a$ is a hermitian element of $A$, all the spectra considered in
statements (i)-(iv) are contained in the real line, and
\begin{align*}
 &(v)&&\sigma_{asc}(L_a)=\sigma_{asc}(R_a),\hskip.1truecm\sigma_{\mathcal{LD}}(L_a)=\sigma_{\mathcal{LD}}(R_a),\\
 &(vi)& &\sigma_{\mathcal{DR}}(a)=\sigma_{dsc}(L_a)=\sigma_{dsc}(R_a)=\sigma_{\mathcal{RD}}(L_a)=\sigma_{\mathcal{RD}}(R_a).&\\
\end{align*}
\end{thm}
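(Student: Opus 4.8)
The plan is to treat the involution of $A$ as a conjugate-linear symmetry interchanging left and right multiplication. Let $J\colon A\to A$ be given by $J(x)=x^*$. Since $A$ is a $C^*$-algebra, $J$ is a conjugate-linear isometric bijection with $J^{-1}=J$, and a direct computation gives $JL_aJ^{-1}=R_{a^*}$ and $JR_aJ^{-1}=L_{a^*}$. Because $J$ is conjugate-linear, one has $J(\overline\lambda e)J^{-1}=\lambda e$, so that for every linear $T\in L(A)$ and every $\lambda\in\mathbb C$, $J(T-\overline\lambda)J^{-1}=JTJ^{-1}-\lambda$; conjugating by $J$ thus conjugates the spectral parameter.

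First I would record that conjugation by $J$ preserves all the data entering the relevant regularities. If $S=JTJ^{-1}$, then $S^{k}=JT^{k}J^{-1}$, hence $N(S^{k})=J(N(T^{k}))$ and $R(S^{k})=J(R(T^{k}))$; since $J$ is an isometric homeomorphism, the ascent, the descent, and the closedness of the iterated ranges agree for $S$ and $T$. Consequently $T\in\mathcal R$ if and only if $S\in\mathcal R$ for each of $\mathcal R^a_4(A)$, $\mathcal R^a_9(A)$, $\mathcal{LD}(A)$ and $\mathcal{RD}(A)$, and combining this with $S-\lambda=J(T-\overline\lambda)J^{-1}$ yields $\sigma_{\mathcal R}(S)=\overline{\sigma_{\mathcal R}(T)}$ for each of the four derived spectra. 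Taking $(S,T)=(R_{a^*},L_a)$ and $(S,T)=(L_{a^*},R_a)$ produces exactly the two identities in each of statements (i)--(iv) by a single uniform argument.

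For the hermitian case I would first observe that the bars are vacuous: if $a=a^*$ then $\sigma(a)\subseteq\mathbb R$, and since $\sigma(L_a)=\sigma(R_a)=\sigma(a)$ by \cite[Chapter I, section 5, Proposition 4]{BD} and every spectrum in (i)--(iv) is contained in the ordinary spectrum, all of them lie in $\mathbb R$. With $a^*=a$, statements (iii) and (iv) then read $\sigma_{asc}(R_a)=\sigma_{asc}(L_a)$ and $\sigma_{\mathcal{LD}}(R_a)=\sigma_{\mathcal{LD}}(L_a)$, which is (v), while (i) and (ii) give $\sigma_{dsc}(R_a)=\sigma_{dsc}(L_a)$ and $\sigma_{\mathcal{RD}}(R_a)=\sigma_{\mathcal{RD}}(L_a)$, reducing (vi) to the chain for $L_a$ alone. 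That chain I would close by squeezing: by \cite[Theorem 3]{B} one has $\sigma_{dsc}(L_a)\subseteq\sigma_{\mathcal{RD}}(L_a)\subseteq\sigma_{\mathcal{DR}}(L_a)$, by \cite[Theorem 4(iv)]{B} $\sigma_{\mathcal{DR}}(L_a)=\sigma_{\mathcal{DR}}(a)$, and for a hermitian element every isolated point of $\sigma(a)$ is a pole, so $\mathcal{IES}(a)=\emptyset$ and the third identity of \cite[Theorem 12]{B} gives $\sigma_{\mathcal{DR}}(a)=\mathrm{acc}\,\sigma(a)$. Since $\sigma(L_a)=\sigma(a)$ and, by the structure theory of the descent spectrum in \cite{BKMO}, $\mathrm{acc}\,\sigma(L_a)\subseteq\sigma_{dsc}(L_a)$, the chain
$$
\mathrm{acc}\,\sigma(a)\subseteq\sigma_{dsc}(L_a)\subseteq\sigma_{\mathcal{RD}}(L_a)\subseteq\sigma_{\mathcal{DR}}(L_a)=\sigma_{\mathcal{DR}}(a)=\mathrm{acc}\,\sigma(a)
$$
collapses to equalities, and the $R_a$ versions follow from the symmetries already recorded.

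The step I expect to be the main obstacle is the $C^*$-algebraic input $\mathcal{IES}(a)=\emptyset$, i.e.\ that an isolated point $\lambda_0$ of $\sigma(a)$ is always a pole of the resolvent. I would prove this by noting that the corresponding Riesz idempotent is self-adjoint, whence $(a-\lambda_0)$ times that idempotent is a self-adjoint quasinilpotent and therefore zero, so $\lambda_0$ is a pole of order one. The example $a(t)=t$ on $C[0,1]$, where $0\in\mathrm{acc}\,\sigma(a)$ yet $L_a$ has finite ascent and infinite descent, shows both why hermiticity is indispensable here and why only the descent side, and not the ascent side, collapses in (vi). The one remaining point needing care is the general containment $\mathrm{acc}\,\sigma(L_a)\subseteq\sigma_{dsc}(L_a)$ — that finite descent forces an isolated spectral point — which I would import from the descent-spectrum theory of \cite{BKMO}; everything else is formal.
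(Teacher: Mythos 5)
Your proof of statements (i)--(v) is correct, and your route is genuinely cleaner than the paper's: conjugation by the conjugate-linear isometric involution $J(x)=x^*$, with $JL_aJ^{-1}=R_{a^*}$, $JR_aJ^{-1}=L_{a^*}$ and $J(T-\overline{\lambda})J^{-1}=JTJ^{-1}-\lambda$, transfers kernels, ranges, their closedness, and hence membership in all four regularities at once. The paper instead argues subspace by subspace, from $(R(L_a))^*=R(R_{a^*})$ and $(N(L_a))^*=N(R_{a^*})$, and for the closed-range regularities $\mathcal{RD}$ and $\mathcal{LD}$ it must additionally invoke the Harte--Mbekhta theorems on regular elements of $C^*$-algebras \cite{HM}; your single symmetry argument makes that detour unnecessary.

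However, your proof of (vi) has a genuine gap. The containment you call general and propose to import from \cite{BKMO}, namely $\mathrm{acc}\,\sigma(T)\subseteq\sigma_{dsc}(T)$ (``finite descent forces an isolated spectral point''), is false. Take the backward shift $T\in L(\ell^2(\mathbb N))$, $T((x_n)_{n\ge 1})=(x_2,x_3,\ldots)$: it is surjective, so $d(T)=0$ and $0\notin\sigma_{dsc}(T)$, while $\sigma(T)$ is the closed unit disc, so $0\in\mathrm{acc}\,\sigma(T)$. No such lemma can therefore appear in \cite{BKMO}; what is true there (and classically, in the Kaashoek--Lay circle of ideas) is the boundary version: if $\lambda\in\partial\sigma(T)$ and $d(T-\lambda)<\infty$, then $\lambda$ is a pole of the resolvent. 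As written, the inclusion $\mathrm{acc}\,\sigma(a)\subseteq\sigma_{dsc}(L_a)$ that your squeeze hinges on is unjustified.

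The gap is repairable inside your framework, but only by using hermiticity once more: since $\sigma(L_a)=\sigma(a)\subseteq\mathbb R$, the spectrum has empty interior, so every point of it is a boundary point; the boundary version of the descent result then shows that any spectral point of $L_a$ with finite descent is a pole, hence isolated, which yields exactly $\mathrm{acc}\,\sigma(L_a)\subseteq\sigma_{dsc}(L_a)$ and closes your chain. (This also shows reality of the spectrum is doing real work in (vi), beyond making the conjugation bars vacuous.) The paper sidesteps the issue entirely: it obtains (vi) from \cite[Theorem 5(ii)]{B}, an algebraic characterization of Drazin invertibility through finiteness of the descents of both multiplication operators, combined with the equalities $\sigma_{dsc}(L_a)=\sigma_{dsc}(R_a)$ and $\sigma_{\mathcal{RD}}(L_a)=\sigma_{\mathcal{RD}}(R_a)$ already established in (i)--(ii), so no topological statement about accumulation points is ever needed. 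Your Riesz-idempotent argument that $\mathcal{IES}(a)=\emptyset$ for hermitian $a$ is correct and a nice observation, but by itself it only identifies $\sigma_{\mathcal{DR}}(a)$ with $\mathrm{acc}\,\sigma(a)$; it does not supply the missing containment.
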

\begin{proof} Let $a\in A$. According to the fact that $(R(L_a))^*=R(R_{a^*})$, a straightforward
calculation proves that necessary and sufficient for $d(L_a)$ to be finite is the fact
that $d(R_{a^*})$ is finite. Moreover, in this case $d(L_a)=d (R_{a^*})$. However, 
from this identity the first statement can be easily deduced.\par

\indent On the other hand, due to the fact that $b\in A$ is a regular element of $A$ if and only if
$b^*$ is regular, if $0\notin \sigma_{dsc}(L_a)$, $d=d(L_a)$, and $R(L_{a^d})$
is closed, then according to \cite[Theorem 2]{HM}, \cite[Theorem 8]{HM}, and what has been proved,
$0\notin \sigma_{dsc}(R_{a^*})$, $d=d(R_{a^*})$, and $R(R_{a*^d})$ is closed. Therefore,
$\sigma_{\mathcal{RD}}(R_{a^*})\subseteq\overline{\sigma_{\mathcal{RD}}(L_a)}$. A similar argument proves the
other inclusion. Interchanging $a$ with $a^*$, the remaining identity can be proved.\par

\indent According to the identity $(N(L_a))^*=N(R_{a^*})$, it is not difficult to prove that
necessary and sufficient for $a(L_a)$ to be finite is the fact that $a(R_{a^*})$
is finite. Futhermore, in this case $a(L_a)=a(R_{a^*})$. As in the case of
the descent spectrum, from this identity the third statement can be proved.\par

\indent In order to prove the fourth statement, apply an argument similar to the one used to prove the second identity of statement (ii),
considering the ascent spectrum instead of the descent spectrum.\par

\indent As regard the last statements, if $a=a^*$, then an easy calculation proves that
$\sigma_{\mathcal{DR}}(a)$ is contained in the real line. 
However, according to \cite[Theorem 3]{B} and \cite[Theorem 4(iv)]{B}, all the spectra in statements (i)-(iv) are contained  in $\mathbb R$. Moreover,
$\sigma_{dsc}(L_a)=\sigma_{dsc}(R_a)$, $\sigma_{\mathcal{RD}}(L_a)=\sigma_{\mathcal{RD}}(R_a)$,
$\sigma_{asc}(L_a)=\sigma_{asc}(R_a)$, $\sigma_{\mathcal{LD}}(L_a)=\sigma_{\mathcal{LD}}(R_a)$. Finally, statement (vi) can be easily derived from \cite[Theorem 5(ii)]{B} and
what has been proved. 
\end{proof}   

\indent Recall that, according to the example \cite[page 265]{BKMO}, there is no relation that lies 
the descent of $R_T$ to the descent of $T$, $T\in L(H)$, $H$ a Hilbert space. 
In the following theorem the ascent and the descent spectra of $R_T$ will be characterized.
See also \cite[Theorem 8]{B} and \cite[Theorem 9]{B}.\par

\begin{thm}
Let $H$ be a Hilbert space. Then, the following statement hold.\par
\begin{align*}
&(i)& & \sigma_{dsc}(R_T)=   \overline{\sigma_{dsc}(L_{T^*})}=  \overline{\sigma_{dsc}(T^*)},&
 & \sigma_{dsc}(R_{T^*})=   \overline{\sigma_{dsc}(L_T)}=  \overline{\sigma_{dsc}(T)}.&\\
&(ii)& & \sigma_{asc}(R_T)=   \overline{\sigma_{asc}(L_{T^*})}=  \overline{\sigma_{asc}(T^*)},&
& \sigma_{asc}(R_{T^*})=   \overline{\sigma_{asc}(L_T)}=  \overline{\sigma_{asc}(T)}.&\\
\end{align*} 

\indent Furthermore, when $T=T^*$, $\sigma_{dsc}(R_T)$ and  $\sigma_{asc}(R_T)$ are subsets of the real line, and 
\begin{align*}
&(iii)& & \sigma_{asc}(R_T)=  \sigma_{asc}(L_T)= \sigma_{asc}(T),&\\
&(iv)&  &\sigma_{\mathcal{DR}}(T)=\sigma_{dsc}(R_T)=\sigma_{dsc}(L_T)=\sigma_{dsc}(T).&\\
\end{align*} 
\end{thm}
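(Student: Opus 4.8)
The plan is to exploit the fact that $L(H)$ is itself a $C^*$-algebra, so that Theorem \ref{thm10} applies \emph{verbatim} with $A=L(H)$ and $a=T$. The only extra ingredient is the passage between the descent (resp.\ ascent) spectrum of a multiplication operator and that of the underlying operator; for left multiplication this is conjugation-free, and I would record from \cite[Theorem 8]{B} the identities $\sigma_{dsc}(L_S)=\sigma_{dsc}(S)$ and $\sigma_{asc}(L_S)=\sigma_{asc}(S)$, valid for every $S\in L(H)$. These reflect that $L_S$ acts like $S$ columnwise on $L(H)$, whereas $R_S$ carries a conjugation, which is precisely the source of the bars in the statement.

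For statement (i), I would apply Theorem \ref{thm10}(i) to $A=L(H)$ and $a=T$, obtaining $\sigma_{dsc}(L_{T^*})=\overline{\sigma_{dsc}(R_T)}$; taking complex conjugates gives $\sigma_{dsc}(R_T)=\overline{\sigma_{dsc}(L_{T^*})}$, and substituting $\sigma_{dsc}(L_{T^*})=\sigma_{dsc}(T^*)$ yields $\sigma_{dsc}(R_T)=\overline{\sigma_{dsc}(T^*)}$. The companion chain $\sigma_{dsc}(R_{T^*})=\overline{\sigma_{dsc}(L_T)}=\overline{\sigma_{dsc}(T)}$ comes the same way from the second identity of Theorem \ref{thm10}(i) together with $\sigma_{dsc}(L_T)=\sigma_{dsc}(T)$. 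Statement (ii) is obtained by the identical argument, with Theorem \ref{thm10}(iii) (ascent) in place of Theorem \ref{thm10}(i) and the ascent multiplication identity $\sigma_{asc}(L_S)=\sigma_{asc}(S)$ in place of the descent one.

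For the hermitian case $T=T^*$, I would first invoke the \emph{furthermore} part of Theorem \ref{thm10} (with $a=T$), which guarantees that all the spectra occurring in its statements (i)--(iv) lie in $\mathbb R$; in particular $\sigma_{dsc}(R_T)$ and $\sigma_{asc}(R_T)$ are subsets of the real line, so the bars in (i)--(ii) become harmless. Then (iii) follows from Theorem \ref{thm10}(v), namely $\sigma_{asc}(L_T)=\sigma_{asc}(R_T)$, combined with $\sigma_{asc}(L_T)=\sigma_{asc}(T)$; and (iv) follows from Theorem \ref{thm10}(vi), namely $\sigma_{\mathcal{DR}}(T)=\sigma_{dsc}(L_T)=\sigma_{dsc}(R_T)$, combined with $\sigma_{dsc}(L_T)=\sigma_{dsc}(T)$.

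Once Theorem \ref{thm10} is available the remaining work is pure bookkeeping, and the one point I would watch most carefully — the crux of the argument — is the left/right asymmetry: $\sigma_{dsc}(L_S)$ equals $\sigma_{dsc}(S)$ with no conjugation, while $R_S$ produces the conjugate of the spectrum of $S^*$. Matching these conjugation-free multiplication identities against the correctly conjugated outputs of Theorem \ref{thm10} is exactly where a sign or a swap could be lost, so I would verify each of the two halves of (i) and (ii) separately rather than by symmetry.
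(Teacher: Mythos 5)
Your proposal is correct and follows essentially the same route as the paper, whose proof is precisely ``apply Theorem \ref{thm10}, \cite[Theorem 8]{B} and \cite[Theorem 9]{B}'': you specialize Theorem \ref{thm10} to the $C^*$-algebra $A=L(H)$ and combine it with the conjugation-free identities $\sigma_{dsc}(L_S)=\sigma_{dsc}(S)$, $\sigma_{asc}(L_S)=\sigma_{asc}(S)$ from \cite[Theorem 8]{B}. The only cosmetic difference is that where the paper also cites \cite[Theorem 9]{B} for the Hilbert-space relations involving $R_T$, you re-derive those relations directly from Theorem \ref{thm10}, which is equally valid.
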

\begin{proof} Apply Theorem \ref{thm10}, \cite[Theorem 8]{B} and \cite[Theorem 9]{B}.
\end{proof}

\section{Meromorphic Banach space operators} \label{sect3}

\indent Recall that given $X$ a Banach space and $T\in L(X)$, the operator $T$ is said to be  \it meromorphic\rm, 
if $\sigma (T)\setminus \{0\}\subseteq  \Pi (T)$. As an application of the properties of the Drazin spectrum, 
in the following theorem several results on
 meromorphic Banach space operators will be proved, see \cite[Corolalry 1.9]{BKMO}
and the paragraph that follows \cite[Corollary 2.10]{BBO}.\par

\begin{thm} Let $X$ be a Banach space and consider $T\in L(X)$. \par
\noindent (a) The following statements are equivalent.
\begin{align*}
& (i)\hskip.2truecmT\hbox{ is meromorphic},& &(ii)\hskip.2truecm \sigma_{\mathcal{DR}}(T)\subseteq \{0\},&\\
& (iii)\hskip.2truecm L_T\in L(L(X))\hbox{ is meromorphic},&
&(iv)\hskip.2truecm R_T\in L(L(X))\hbox{ is meromorphic}.&\\
\end{align*}
\noindent (b) Let $S$ and $T\in L(X)$ . Then, necessary and sufficient for $ST$ to be meromorphic is the fact that
$TS$ is meromorphic.\par
\noindent (c) Let $F\in L(X)$ and suppose that there exists a positive integer $n$ such that $F^n$ has finite dimensional range 
and $F$ commutes with $T$. Then,
if $T$ is meromorphic, $T+F$ is meromorphic.
\end{thm}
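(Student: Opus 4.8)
The plan is to reduce all three parts to the decomposition $\sigma(S)=\sigma_{\mathcal{DR}}(S)\cup\Pi(S)$ with $\sigma_{\mathcal{DR}}(S)\cap\Pi(S)=\emptyset$ recalled from \cite[Theorem 12]{B}. For part (a) I would first note that (i) and (ii) are the same statement rewritten: since the union is disjoint, a nonzero $\lambda\in\sigma(T)$ lies in $\Pi(T)$ exactly when it does not lie in $\sigma_{\mathcal{DR}}(T)$, so $\sigma(T)\setminus\{0\}\subseteq\Pi(T)$ holds if and only if $\sigma_{\mathcal{DR}}(T)\subseteq\{0\}$. For (ii)$\Leftrightarrow$(iii) and (ii)$\Leftrightarrow$(iv) I would invoke $\sigma_{\mathcal{DR}}(T)=\sigma_{\mathcal{DR}}(L_T)=\sigma_{\mathcal{DR}}(R_T)$ from \cite[Theorem 4(iv)]{B}, so that $\sigma_{\mathcal{DR}}(T)\subseteq\{0\}$ is equivalent to $\sigma_{\mathcal{DR}}(L_T)\subseteq\{0\}$ and to $\sigma_{\mathcal{DR}}(R_T)\subseteq\{0\}$, and then apply the already established equivalence (i)$\Leftrightarrow$(ii) to $L_T$ and to $R_T$. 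Part (b) is then immediate: by (a), $ST$ is meromorphic iff $\sigma_{\mathcal{DR}}(ST)\subseteq\{0\}$, and Theorem \ref{thm3} gives $\sigma_{\mathcal{DR}}(ST)=\sigma_{\mathcal{DR}}(TS)$, whence $ST$ is meromorphic iff $TS$ is.

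For part (c), the substantive case, I would use characterization (ii) and prove $\sigma_{\mathcal{DR}}(T+F)\subseteq\{0\}$, i.e. that $(T+F)-\lambda$ is Drazin invertible for every $\lambda\neq 0$. The first step is to read off the structure of $F$ from the hypothesis. Since $R(F^n)$ is finite dimensional, the nested ranges $R(F^k)$ (finite dimensional) and the nested kernels $N(F^k)$ (of finite codimension) stabilize, so $F$ has finite ascent and descent; by the standard Fitting-type decomposition there is a $p$ with $X=X_0\oplus X_1$, where $X_0=N(F^p)$, $X_1=R(F^p)\subseteq R(F^n)$ is finite dimensional, $F|_{X_1}$ is invertible, and $F|_{X_0}$ is nilpotent. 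Because $F$ commutes with $T$ and the idempotent onto $X_0$ along $X_1$ is a function of $F$, this idempotent commutes with $T$; hence $T$ respects the decomposition and $T+F=(T_0+F_0)\oplus(T_1+F_1)$ with $F_0=F|_{X_0}$ nilpotent and $F_1=F|_{X_1}$ invertible.

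On $X_1$, which is finite dimensional, every operator is algebraic, so by Theorem 2.1 one gets $\sigma_{\mathcal{DR}}(T_1+F_1)=\emptyset$. On $X_0$ I would establish the key lemma that a commuting nilpotent perturbation preserves Drazin invertibility: if $S$ is Drazin invertible and $N$ is nilpotent with $SN=NS$, then $S+N$ is Drazin invertible. This follows by splitting $X_0$ along the Drazin decomposition of $S$, which $N$ respects: on the part where $S$ is nilpotent, $S+N$ is a sum of commuting nilpotents, hence nilpotent; on the part where $S$ is invertible, $N$ restricts to a commuting nilpotent and $S+N=S(I+S^{-1}N)$ is invertible. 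Applying the lemma with $S=T_0-\lambda$, which is Drazin invertible for $\lambda\neq 0$ since $\sigma_{\mathcal{DR}}(T_0)\subseteq\sigma_{\mathcal{DR}}(T)\subseteq\{0\}$, and with $N=F_0$, shows $(T_0+F_0)-\lambda$ is Drazin invertible for every $\lambda\neq 0$, so $\sigma_{\mathcal{DR}}(T_0+F_0)\subseteq\{0\}$. As Drazin invertibility of a block-diagonal operator is equivalent to Drazin invertibility of each block, $\sigma_{\mathcal{DR}}(T+F)=\sigma_{\mathcal{DR}}(T_0+F_0)\cup\sigma_{\mathcal{DR}}(T_1+F_1)\subseteq\{0\}$, and part (a) yields that $T+F$ is meromorphic.

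The main obstacle is entirely in part (c): recognizing that $F$ splits into a nilpotent part and a \emph{finite dimensional} invertible part, that commutativity forces $T$ to inherit the splitting, and that the two blocks demand genuinely different arguments. The delicate point is that on $X_1$ the commuting-nilpotent lemma is unavailable, since there $F_1$ is invertible rather than nilpotent; it is precisely here that the finite dimensionality of $R(F^n)$ is used in an essential way, through the triviality of the Drazin spectrum on finite dimensional spaces.
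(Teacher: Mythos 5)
Your proof is correct, and for parts (a) and (b) it coincides with the paper's own argument: the equivalence (i)$\Leftrightarrow$(ii) is read off from the disjoint decomposition $\sigma(T)=\sigma_{\mathcal{DR}}(T)\cup\Pi(T)$ of \cite[Theorem 12]{B}, the equivalences with (iii) and (iv) from $\sigma_{\mathcal{DR}}(T)=\sigma_{\mathcal{DR}}(L_T)=\sigma_{\mathcal{DR}}(R_T)$ of \cite[Theorem 4(iv)]{B}, and (b) from Theorem~\ref{thm3} combined with statement a(ii). The divergence is entirely in part (c). The paper settles it in two lines by quoting the commuting-perturbation theorems \cite[Theorem 4]{K} and \cite[Theorem 2.2]{KL}, which state precisely that under your hypotheses $U$ is Drazin invertible if and only if $U+F$ is; applied to $U=T-\lambda$ for each $\lambda$ this yields $\sigma_{\mathcal{DR}}(T)=\sigma_{\mathcal{DR}}(T+F)$, and (c) follows from a(ii). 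You instead rederive that perturbation result from scratch: finite dimensionality of $R(F^n)$ forces finite ascent and descent of $F$, the Fitting decomposition splits $X$ as $N(F^p)\oplus R(F^p)$ with $F$ nilpotent on the first summand and invertible on the finite-dimensional second one, the double-commutant property of the spectral idempotent (equivalently, of the Drazin inverse, going back to \cite{Dr}) makes $T$ respect the splitting, and the two blocks are then handled by your commuting-nilpotent lemma and by Theorem 2.1 (empty Drazin spectrum $\Leftrightarrow$ algebraic), respectively. This argument is sound; the two facts you assert without proof --- that Drazin invertibility of a block-diagonal operator is equivalent to Drazin invertibility of both blocks (via the ascent/descent characterization), and that the idempotent onto $N(F^p)$ along $R(F^p)$ commutes with everything commuting with $F$ --- are both standard and true. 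What your route buys is self-containedness: the perturbation theorem is reproved inside the paper's own toolkit rather than imported, and it exposes exactly where finite dimensionality of $R(F^n)$ enters. What the paper's route buys is brevity and a cleaner logical structure, since the cited results of King and Kaashoek--Lay cover exactly this class of perturbations; your part (c) essentially reconstructs their proof in the case at hand.
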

\begin{proof} To prove the equivalence between statements  a(i) and a(ii), apply \cite[Theorem 12]{B}.
Next, to prove the equivalence between statements a(ii)-a(iv), apply \cite[Theorem 4(iv)]{B}.\par
\indent To prove the second statement, apply Theorem \ref{thm3} and statement (a)(ii).\par
\indent Concerning the third statement, given $U\in L(X)$ and $F$ as in statement (c), according to  \cite[Theorem 4]{K} and \cite[Theorem 2.2]{KL},
$U$ is Drazin invertible if and only if $U+F$ is Drazin invertible.   
In particular, $\sigma_{\mathcal{DR}}(U)=\sigma_{\mathcal{DR}}(U+F)$. Clearly,
statement (c) follows from this identity and a(ii).
\end{proof}

\vskip.3truecm
\noindent Enrico Boasso\par
\noindent E-mail address: enrico\_odisseo@yahoo.it 
\end{document}